\newtheorem{theorem}{Theorem}[section]
\newtheorem{proposition}[theorem]{Proposition}
\newtheorem{corollary}[theorem]{Corollary}
\theoremstyle{definition}
\newtheorem{definition}[theorem]{Definition}
\newtheoremstyle{remark}
    {} 
    {0.6em} 
    {}          
    {}          
    {\bfseries} 
    {.}         
    {.5em}      
    {}          
\theoremstyle{remark}
\newtheorem{remark}{\emph{\textbf{Remark}}}[section]
\newtheorem{comment}[theorem]{Comment}
\numberwithin{equation}{section}
\begin{document}

\title{Extension operators and nonlinear structure of Banach spaces}

\author[M.A. Sofi]{M.A. Sofi$^1$}

\address{$^{1}$ Department of Mathematics, University of Kashmir, Srinagar-190006, India.}
\email{\textcolor[rgb]{0.00,0.00,0.84}{aminsofi@gmail.com}}


\subjclass[2010]{Primary 46B03, 46B20; Secondary 46C15, 28B20.}

\keywords{Banach space, Hilbert space, Lipschitz map, Extension.}

\begin{abstract}
The problem involving the extension of functions from a certain class and defined on subdomains of the ambient space to the whole space is an old and a well investigated theme in analysis. A related question whether the extensions that result in the process may be chosen in a linear or a continuous manner between appropriate spaces of functions turns out to be highly nontrivial. That this holds for the class of continuous functions defined on metric spaces is the well-known Borsuk-Dugundji theorem which asserts that given a metric space M and a subspace S of M, each continuous function g on S can be extended to a continuous function f on X such that the resulting assignment from C(S) to C(M) is a norm-one continuous linear `extension' operator.

The present paper is devoted to an investigation of this problem in the context of extendability of Lipschitz functions from closed subspaces of a given Banach space to the whole space such that the choice of the extended function gives rise to a bounded linear (extension) operator between appropriate spaces of Lipschitz functions. It is shown that the indicated property holds precisely when the underlying space is  isomorphic to Hilbert space. Among certain useful\ consequences of this theorem, we provide an isomorphic analogue of a well-known theorem of S. Reich by showing that closed convex subsets of a Banach space X arise as Lipschitz retracts of X precisely when X is isomorphically a Hilbert space. We shall also discuss the issue of bounded linear extension operators between spaces of Lipschitz functions now defined on arbitrary subsets of Banach spaces and provide a direct proof of the non-existence of such an extension operator by using methods which are more accessible than those initially employed by the authors.
\end{abstract}

 \maketitle

\section{Introduction}
The Hahn-Banach extension property for subspaces $Y$ of a Hilbert space $X$ yields the existence of a bounded linear map $\psi: Y^{*}\to X^{*}$ such that $\psi(g)|_{Y}= g$ for each $g\in Y^{*}$. (For each $g\in Y^{*}$, simply choose $f$ to be the composite of $g$ with the orthogonal projection of $X$ onto $Y$).
 Further, the existence of such a map $\psi$ for each choice of $Y$ characterises the Banach space $X$ as a Hilbert space (see Theorem $1.1$ below). A nonlinear analogue of this property could be one of the following types:
\begin{enumerate}
\item[(a)] Polynomial analogue: Existence of $\psi$ from the space of polynomials on $Y$ to the space of polynomials on $X$.
\item[(b)] Lipschitz analogue (for subspaces): Existence of $\psi$ from the space of Lipschitz functions on $Y$ to the space of Lipschitz functions on $X$.
\item[(c)] Lipschitz analogue (for subsets): Existence of $\psi$ from the space of Lipschitz functions on subsets of $X$ to the space of Lipschitz functions on $X$.
\end{enumerate}
As regards the polynomial analogue, unlike the case of continuous linear functionals which always extend to the ambient space, polynomials defined on subspaces do not always extend to the larger space. For example, the scalar polynomial $P$ on $\ell_{2}$ given by  $P\big((x_{i})\big)=\sum_{i\ge 1}x_{i}^{2}$ cannot be extended to a polynomial on $\ell_{\infty}$. However, as in the case of linear functionals on subspaces of a Hilbert space where projections can be used to extend mappings from subspaces to the larger space, polynomials also admit extensions from subspaces of a Hilbert space to the whole space. Further, there are non-Hilbertian Banach spaces which admit extension of polynomials from subspaces to the larger space. Indeed, Maurey's extension theorem guarantees the extendability of bilinear forms (and therefore of scalar polynomials of degree $2$) for type $2$ Banach spaces. This raises the question of the description of Banach spaces $X$ such that each polynomial on each subspace of $X$ admit an extension to a polynomial of the same degree on $X$. Whereas it remains unknown whether there are non-Hilbertain Banach spaces witnessing the aforementioned extension property, it turns out that the existence and boundedness of the linear `extension' operator between appropriate spaces of polynomials (of a fixed degree $n$) involving the given Banach space $X$ does indeed characterise $X$ as a Hilbert space. In what follows, the symbol $P^n(Z)$ denotes the space of polynomials of degree at most $n$ on a Banach space $Z$.

\begin{theorem} \cite{{FK},{FP}}
For a Banach space $X$, TFAE
\end{theorem}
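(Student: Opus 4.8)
The plan is to anchor the argument at the quadratic level $n=2$, where $P^2(Z)$ is governed by symmetric bilinear forms, and then to leverage the two classical inputs already in play: Maurey's extension theorem and Kwapie\'n's characterisation of Hilbert space by simultaneous type $2$ and cotype $2$. Throughout, the uniform boundedness of the extension operators (independent of the subspace) is the quantitative feature I expect to carry the weight.

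First I would dispatch the implication that starts from the hypothesis that $X$ is isomorphically a Hilbert space. Fixing an isomorphism $T:X\to H$ onto a Hilbert space and a closed subspace $Y\subseteq X$, I would transport the orthogonal projection of $H$ onto $\overline{T(Y)}$ back to a bounded projection $\pi:X\to Y$ and set $\psi(g)=g\circ\pi$ for $g\in P^n(Y)$. Since composing a polynomial of degree $\le n$ with a bounded linear map again yields a polynomial of degree $\le n$, $\psi$ maps $P^n(Y)$ into $P^n(X)$, evidently linearly, with $\psi(g)|_Y=g$; the operator norm is then controlled by a power of $\|\pi\|\le\|T\|\,\|T^{-1}\|$, uniformly over all $Y$. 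This yields the quantitative extension property with a bound depending only on the isomorphism constant of $X$.

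For the converse — the substantive direction — I would first reduce to bilinear forms. Restricting attention to the $2$-homogeneous part of $P^n$ (via polarisation together with the bounded projection onto homogeneous components, justified by Cauchy estimates), a bounded linear extension operator on $P^n$ forces a uniformly bounded linear extension operator for symmetric bilinear forms on the finite-dimensional subspaces of $X$. The crux is then to read off \emph{both} a type $2$ and a cotype $2$ estimate from this single property. The type $2$ side is the more transparent one: it mirrors the content of Maurey's extension theorem, the extendability of bilinear forms being precisely the dual expression of a factorisation through Hilbert space, so I expect the uniform bound on $\|\psi\|$ to feed directly into the type $2$ constant.

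The cotype $2$ estimate is where I anticipate the main obstacle. The extension property is, a priori, a one-sided (upper) factorisation statement and does not obviously deliver a lower $\ell_2$-bound. I would attempt to extract cotype $2$ by a duality argument: testing the extension operator against a well-chosen family of bilinear forms built from a finite sequence $(x_i)\subseteq X$ and their supporting functionals, and using the adjoint of $\psi$ to reverse the inequality. Concretely, one evaluates the extended forms on the $x_i$ and estimates the resulting Gram-type matrix, so that the uniform norm bound converts an upper square-function estimate into its lower counterpart. Once both type $2$ and cotype $2$ are in hand with constants independent of the subspace, Kwapie\'n's theorem closes the argument by furnishing the isomorphism of $X$ with a Hilbert space, completing the cycle of equivalences.
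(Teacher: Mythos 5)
The paper does not actually prove Theorem 1.1 --- it is quoted from Fakhoury \cite{FK} and Fern\'andez-Unzueta--Prieto \cite{FP} --- but the surrounding machinery (Proposition 2.2 and the proof of Corollary 2.4) makes the intended route clear, and your proposal diverges from it at the decisive point. Your implication (i) $\Rightarrow$ (ii) via a transported orthogonal projection is fine. For the converse, however, reducing to the $2$-homogeneous component discards exactly the part of the hypothesis that does the work. Since $P^n$ here is the space of polynomials of degree \emph{at most} $n$, the bounded projection onto the degree-one homogeneous component (justified by the same Cauchy estimates you invoke) applied to $\psi$ immediately produces a bounded linear extension operator $Y^*\to X^*$, i.e.\ statement (iii). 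From there the classical chain is: such an operator exists for every $Y$ if and only if every $Y$ is locally complemented (Proposition 2.2), this forces the finite-dimensional subspaces of $X$ to be uniformly complemented, and the Lindenstrauss--Tzafriri complemented-subspace theorem (not Kwapie\'n's type/cotype criterion) then yields that $X$ is isomorphic to a Hilbert space.

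The route you propose instead --- extracting type $2$ and cotype $2$ from the extendability of symmetric bilinear forms --- cannot work as described. By Maurey's extension theorem every type-$2$ space (for instance $\ell_4$, which is not isomorphic to a Hilbert space and fails cotype $2$) already has the property that every bilinear form on every subspace extends; so bilinear extendability alone can never deliver a cotype $2$ estimate, and the paper itself records that it is open whether the pointwise polynomial extension property forces Hilbertian structure. What distinguishes Hilbert space is the existence of a bounded \emph{linear extension operator}, and your sketch of the cotype step (``testing against a well-chosen family \dots\ using the adjoint of $\psi$ to reverse the inequality'') never identifies where that operator-linearity enters; as you concede, this is the main obstacle, and it is left unfilled. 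A smaller point: you take for granted that the bounds are uniform over all subspaces, whereas the statement only posits existence for each subspace separately; that uniformity itself requires an argument in the classical proof.
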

\begin{enumerate}
\item[(i)] $X$ is (isomorphic to ) a Hilbert space.
\item[(ii)] For each subspace $Y$ of $X$, there exists a bounded linear map $\psi: P^{n}(Y) \to P^{n}(X)$ such that $\psi(g)|_{Y}= g$ for each $g\in P^{n}(Y)$.
\item[(iii)]  For each subspace $Y$ of $X$, there exists a bounded linear map $\psi: Y^{*}\to X^{*}$ such that $\psi(g)|_{Y}= g$ for each $g\in Y^{*}$.
\end{enumerate}

\section{Main results}
We begin with the Lipschitz analogue of Theorem 1.1 and ask if it is possible to choose for each subspace $Y$ of $X$ a bounded linear `extension' map $\psi: \textnormal{Lip}(Y) \to \textnormal{Lip}(X)$, i.e. the map $\psi$ verifies the condition: $\psi(g)|_{Y}= g$ for each $g\in \textnormal{Lip}(Y)$. Here $\textnormal{Lip}(M)$ stands for the space of all Lipschitz functions $f: M \to \mathbb R$, defined on a metric space $M$, whereas we shall mostly  consider  the (sub) space $Lip_0(M)$ of $Lip(M)$ consisting of functions vanishing at a distinguished point, say $\theta \in M$. It is easily checked that under pointwise operations, $\textnormal{Lip}(M)$ is a Banach space when equipped with the norm:
$$\big\|f\big\|=\sup_{x \ne y}\dfrac{\big|f(x)- f(y)\big|}{d(x,y)}.$$

For a comprehensive and the state of art account on Banach space-valued Lipschitz mappings, their extensions in the setting of metric/Banach spaces and of bilipschitz embeddings of metric spaces into nice Banach spaces/Hilbert spaces, one may look into the recent monograph \cite{CN}. Another useful source is \cite{wea} providing a thorough description of the theory underlying the space $Lip_0(M)$ as an algebra and its cannonical predual, the free space $\Im(X)$ of $X$.

The Lipschitz equivalent of Hahn-Banach theorem is an old result of McShane (see \cite{H} Theorem 6.2) which says that a real-valued Lipschitz function can always be extended from an arbitrary subset of a metric space to the whole space. We shall see below that for subspaces $Y$ of a Banach space $X$, the existence of a bounded linear `extension' map $\psi: \textnormal{Lip}_0(Y) \to \textnormal{Lip}_0(X)$ imposes rather severe restrictions on $Y$. We shall need the following definition.
\begin{definition}  A subspace $Y$ of a Banach space $X$ is said to be locally complemented if there exists $c > 0$ such that for each finite dimensional subspace $M$ of $X$, there exists a continuous linear map $f: M \to Y$ with $\big\|f\big\| \le c$ and $f(x)=x$ for all $x \in M \cap Y$.
\end{definition}

Of course a complemented subspace is trivially locally complemented. However, to find examples of locally complemented subspaces which are not complemented, we recall the well-known fact that $c_{0}$ is a Lipschitz retract of $\ell_{\infty}$ (see \cite{BL}, Example 1.5). In particular, $c_{0}$ is locally complemented in $\ell_{\infty}$. (This will be seen to follow from Corollary $2.5$ below). Again, as is well known, $c_{0}$ is not complemented in $\ell_{\infty}$.

The following proposition provides a list of useful necessary and sufficient conditions for a subspace to be locally complemented.

\begin{proposition}\cite{{FK},{K1}}
~\\
Given a subspace $Z$ of a Banach space $X$, the following conditions are equivalent:
\begin{enumerate}
\item $Z$ is locally complemented.
\item $Z^{\perp}$, the annihilator of $Z$ is complemented in $X^{*}$.
\item There exists a bounded linear extension map $G:Z^{*} \longrightarrow X^{*}$.
\end{enumerate}

\end{proposition}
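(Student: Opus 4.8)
The plan is to route all three conditions through a single intermediate object: a bounded linear map $\phi: X \to Z^{**}$ that restricts to the canonical embedding on $Z$. I would first dispose of the purely functional-analytic equivalence (2) $\Leftrightarrow$ (3), and then link (1) and (3) through $\phi$ and its adjoint.

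For (2) $\Leftrightarrow$ (3), consider the restriction map $R: X^* \to Z^*$, $R(\varphi) = \varphi|_Z$. By the scalar Hahn--Banach theorem $R$ is a quotient map with kernel $Z^{\perp}$, so one has the short exact sequence $0 \to Z^{\perp} \hookrightarrow X^* \xrightarrow{R} Z^* \to 0$ and an isometric identification $X^*/Z^{\perp} \cong Z^*$. A bounded linear extension operator $G: Z^* \to X^*$ is precisely a bounded linear right inverse of $R$, since the condition $G(g)|_Z = g$ says exactly $RG = \mathrm{id}_{Z^*}$. Given such a $G$, the operator $I - GR$ is a bounded projection of $X^*$ onto $\ker R = Z^{\perp}$, so $Z^{\perp}$ is complemented; conversely, a projection $P$ of $X^*$ onto $Z^{\perp}$ makes $R$ restricted to $\ker P$ an isomorphism onto $Z^*$ whose inverse is the desired $G$. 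This gives (2) $\Leftrightarrow$ (3) with matching norm bounds.

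For (1) $\Rightarrow$ (3), I would manufacture $\phi$ by a weak$^*$-compactness limit of the local maps. Fix the constant $c$ from local complementation and, for each finite-dimensional $M \subseteq X$, the map $f_M: M \to Z$ with $\|f_M\| \le c$ and $f_M|_{M \cap Z} = \mathrm{id}$. Directing the finite-dimensional subspaces of $X$ by inclusion and choosing an ultrafilter $\mathcal{U}$ refining the order filter (so that $\{M : x \in M\} \in \mathcal{U}$ for each fixed $x$), set $\phi(x) = \mathrm{w}^*\text{-}\lim_{\mathcal{U}} f_M(x) \in Z^{**}$, the limit existing by boundedness and weak$^*$-compactness of balls in $Z^{**}$. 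Since any fixed $x, y$ lie in all but a $\mathcal{U}$-small set of the $M$, the map $\phi$ is linear with $\|\phi\| \le c$, and $\phi(z) = \hat z$ for $z \in Z$. The adjoint then delivers $G$: restricting $\phi^*: Z^{***} \to X^*$ to $Z^* \hookrightarrow Z^{***}$ gives $G := \phi^*|_{Z^*}$, and for $g \in Z^*$, $z \in Z$ one computes $(Gg)(z) = \langle \phi(z), g\rangle = g(z)$, so $G$ extends $g$ and $\|G\| \le c$.

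The reverse implication (3) $\Rightarrow$ (1) is where the real work lies. From a bounded extension $G$ with $\|G\| \le c$, the adjoint $G^*: X^{**} \to Z^{**}$ restricts to a map $G^*|_X: X \to Z^{**}$ of norm $\le c$ which, because $Gg$ extends $g$, satisfies $G^*x = \hat x$ for $x \in Z$ --- the same intermediate $\phi$ as above. The difficulty is that $\phi$ maps into $Z^{**}$ rather than into $Z$, whereas local complementation demands finite-dimensional maps into $Z$ itself. This gap is closed by the principle of local reflexivity: given finite-dimensional $M \subseteq X$, apply it to the finite-dimensional subspace $E = \phi(M) \subseteq Z^{**}$ to obtain, for any $\varepsilon > 0$, an operator $u: E \to Z$ with $\|u\| \le 1 + \varepsilon$ that fixes $E \cap Z \supseteq \{\hat x : x \in M \cap Z\}$ pointwise. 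Then $f_M := u \circ \phi|_M: M \to Z$ has norm $\le c(1+\varepsilon)$ and fixes $M \cap Z$; letting $\varepsilon \to 0$ yields local complementation with constant $c$. I expect the careful invocation of local reflexivity in the form that preserves $E \cap Z$ pointwise to be the main technical obstacle, together with confirming that the three constructions are mutually inverse at the level of the constant $c$.
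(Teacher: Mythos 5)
The paper offers no proof of this proposition: it is stated as a quoted result with citations to Fakhoury and Kalton, so there is nothing in the text to compare your argument against line by line. That said, your proof is correct and is essentially the standard argument found in the cited sources. The equivalence (2) $\Leftrightarrow$ (3) via the quotient map $R:X^*\to Z^*$ with kernel $Z^{\perp}$, the correspondence $G \leftrightarrow I-GR$ between bounded right inverses of $R$ and bounded projections onto $Z^{\perp}$, the ultrafilter/weak$^*$-compactness limit of the local maps producing $\phi:X\to Z^{**}$ with $\phi|_Z$ the canonical embedding (whose adjoint, restricted to $Z^*\subseteq Z^{***}$, gives the extension operator), and the use of the principle of local reflexivity on $\phi(M)\subseteq Z^{**}$ to pull the range back into $Z$ for (3) $\Rightarrow$ (1) -- all of these check out, and the local reflexivity statement you need (an operator $u:E\to Z$ with $\|u\|\le 1+\varepsilon$ fixing $E\cap Z$ pointwise) is indeed the standard Lindenstrauss--Rosenthal form. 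One cosmetic remark: at the end of (3) $\Rightarrow$ (1) you cannot literally ``let $\varepsilon\to 0$'' to recover the constant $c$ for a fixed $M$ without a further compactness argument on the maps $u\circ\phi|_M$ (whose ranges vary with $\varepsilon$); but this is immaterial, since the definition of local complementation only asks for some uniform constant, and $c(1+\varepsilon)$ for any fixed $\varepsilon>0$ already serves.
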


It turns out that there are examples of Banach spaces $X$ and subspaces $Z$ admitting no bounded linear extension maps from $\textnormal{Lip}_0(Z)$ into $\textnormal{Lip}_0(X)$. The (corollary to the) following theorem guarantees the existence of such subspaces inside each Banach space which is non-isomorphic to a Hilbert space.
\begin{theorem}\label{t3}
 Let $X$ be a Banach space and $Z$ a subspace of $X$ such that there exists a bounded linear map `extension' map $F: \textnormal{Lip}_0(Z) \to \textnormal{Lip}_0(X)$. Then $Z$ is locally complemented in $X$.
\end{theorem}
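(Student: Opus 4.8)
The plan is to reduce to the dual characterisation furnished by the preceding proposition and then to manufacture the required linear extension by \emph{linearising} the Lipschitz extension $F$ by means of invariant means. By condition $(3)$ of that proposition it suffices to produce a bounded linear map $G\colon Z^{*}\to X^{*}$ with $G(g)|_{Z}=g$ for every $g\in Z^{*}$. The starting observation is that a bounded linear functional is a Lipschitz function vanishing at $0$ whose Lipschitz constant equals its norm, so $Z^{*}$ sits isometrically inside $\textnormal{Lip}_0(Z)$ and $X^{*}$ inside $\textnormal{Lip}_0(X)$. Thus for $g\in Z^{*}$ the function $Fg\in\textnormal{Lip}_0(X)$ is a genuine Lipschitz extension of $g$, but it is almost never linear, and the whole problem is to extract from it a linear functional on $X$, depending linearly and boundedly on $g$, that still restricts to $g$ on $Z$. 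The tool I would use is translation--invariant means: since $Z$ and $X/Z$ are abelian, hence amenable, groups, there exist translation--invariant means $\mathfrak{m}_{Z}$ and $\mathfrak{m}_{X/Z}$ on the bounded real functions on $Z$ and on $X/Z$ respectively.

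First I would average over $Z$. For $g\in Z^{*}$, writing $\phi=Fg$, I set
\[
G_{0}(g)(x)=\mathfrak{m}_{Z}\big[\,w\mapsto \phi(w+x)-\phi(w)\,\big],\qquad x\in X,
\]
which is legitimate because $w\mapsto\phi(w+x)-\phi(w)$ is bounded by $\|\phi\|_{\mathrm{Lip}}\,\|x\|$ on $Z$. A short check then shows that $G_{0}(g)$ is Lipschitz with $G_{0}(g)(0)=0$, that $G_{0}(g)(z)=g(z)$ for $z\in Z$ (the integrand is the constant $g(z)$ there), and, most importantly, that it is \emph{equivariant along} $Z$, namely
\[
G_{0}(g)(x+z)=G_{0}(g)(x)+g(z)\qquad(x\in X,\ z\in Z),
\]
which follows at once from invariance of $\mathfrak{m}_{Z}$ under the translation $w\mapsto w+z$.

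Next I would average over the quotient. Fixing a set--theoretic section $r\colon X/Z\to X$, I put
\[
G(g)(x)=\mathfrak{m}_{X/Z}\big[\,\bar u\mapsto G_{0}(g)\big(x+r(\bar u)\big)-G_{0}(g)\big(r(\bar u)\big)\,\big],
\]
the integrand again being bounded. The key computation is additivity: splitting $G_{0}(g)(x+y+r(\bar u))-G_{0}(g)(r(\bar u))$ through the intermediate point $y+r(\bar u)$, and using that $y+r(\bar u)$ and $r(\bar y+\bar u)$ differ by an element of $Z$ together with the $Z$--equivariance above, the ``$x$''--part reduces, after the substitution $\bar u\mapsto\bar y+\bar u$ and the $X/Z$--invariance of $\mathfrak{m}_{X/Z}$, to $G(g)(x)$, yielding $G(g)(x+y)=G(g)(x)+G(g)(y)$. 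Since $G(g)$ is also bounded by $\|F\|\,\|g\|\,\|x\|$, it is continuous and hence genuinely $\mathbb{R}$--linear, i.e. $G(g)\in X^{*}$. Equivariance makes the stage--two integrand equal to the constant $g(z)$ when $x=z\in Z$, so $G(g)|_{Z}=g$; and $G$ is manifestly linear in $g$ with $\|G\|\le\|F\|$. The preceding proposition then gives that $Z$ is locally complemented in $X$.

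The subtle point, and what I expect to be the main obstacle, is precisely the interplay between the two averagings. A single invariant mean over all of $X$ would immediately deliver additivity but would destroy the restriction condition, since $\phi(v+z)-\phi(v)$ for $v\notin Z$ is in no way controlled by $g$; conversely, averaging only over $Z$ keeps the restriction but fails to be additive in directions transverse to $Z$. The whole point of the two--stage scheme is that the $Z$--equivariance produced in the first stage is exactly the property that allows the second (quotient) averaging to supply the missing transverse additivity while leaving the restriction to $Z$ untouched. The remaining matters are routine and I would only record them briefly: existence of the means (amenability of abelian groups), boundedness of all integrands, and the elementary fact that a bounded additive functional is linear.
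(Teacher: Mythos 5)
Your proposal is correct and follows essentially the same route as the paper: both linearise the Lipschitz extension operator $F$ by a two--stage averaging with translation--invariant means (first over the subgroup $Z$ to secure the restriction property and $Z$--equivariance, then transversally to secure additivity), producing a bounded linear extension operator $G\colon Z^{*}\to X^{*}$ and concluding via condition $(3)$ of Proposition 2.2. The only cosmetic difference is that you take the second average over the quotient $X/Z$ via a section, whereas the paper averages over $X$ itself; your explicit separation of the two means in fact clarifies a point the paper leaves ambiguous, since its final substitution of $g$ for $Fg$ in the inner integral is only legitimate when that inner mean is understood to range over $Z$.
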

\begin{proof}
 We begin by using $F$ to define another bounded linear map $G: \textnormal{Lip}_0(Z) \to \textnormal{Lip}_0(X)$ which takes values in $X^*$ and when restricted to $Z^*$ yields a bounded linear extension map $G:Z^*\longrightarrow X^*$. The map $G$ is constructed using an old idea of Pelczynski (see \cite{L}) involving the existence of a Banach limit- hereinafter
to be denoted by $\int . ~ dx$ - which is a continuous linear functional on $\ell_\infty(X)$ satisfying the following conditions:
\begin{enumerate}
\item[(a)]  $\big\|\int . . dx\big\|=1$.
\item[(b)]  $\int1.dx=1$.
\item[(c)]  $\int f(x+x^\prime)dx=\int f(x)dx,\; \forall \, f \in \ell_\infty(X)$ and $x^\prime \in X$.
\end{enumerate}
The new map $G: \textnormal{Lip}_0(Z) \to \textnormal{Lip}_0(X)$ is now defined by the formula
$$G(f)(z)=\int\left\{\int\Big[(Ff)(x+y+z)-(Ff)(x+y)\Big]dy\right\}dx,\, f \in \textnormal{Lip}(Z),\, z \in X.$$
The linearity and boundedness of $F$ together with (a) and the Lipschitz property of $f$ yield that this $G: \textnormal{Lip}_0(Z) \to \textnormal{Lip}_0(X)$ is a well defined bounded linear map such that for $z_1, z_2 \in X$, we have, by virtue of (c),
\begin{align*}
G(f)(z_1+z_2)&=\int\left\{\int\Big[(Ff)(x+y+z_1+z_2)-(Ff)(x+y)\Big]dy\right\}dx\\
&=\int\left\{\int\Big[(Ff)(x+y+z_1+z_2)-(Ff)(x+y+z_2)\Big]dy\right\}dx\\
&\qquad\qquad+\displaystyle\int\left\{\int\Big[(Ff)(x+y+z_2)-(Ff)(x+y)\Big]dy\right\}dx\\
&=\int\left\{\int\Big[(Ff)(x+y+z_1)-(Ff)(x+y)\Big]dy\right\}dx\\\
&\qquad\qquad+\displaystyle\int\left\{\int\Big[(Ff)(x+y+z_2)-(Ff)(x+y)\Big]dy\right\}dx\\
&=G(f)(z_1)+G(f)(z_2).
\end{align*}
In other words, $G$ take values in $X^*$ and, therefore, restricting $G$ to $Z^*$ gives a bounded linear map $G: Z^* \to X^*$. To show that  $G$ is also an 'extension map' - as indeed $F$ is - choose $g \in  Z^*$ and $z \in Z.$ Writing
$$H(x,z)=\int[(Fg)(x+y+z)-(Fg)(x+y)]dy,~(x\in X,~z\in Z),$$
we see that
$$ G(g)(z)=\int H(x,z)dx.$$
Using (c), we have
\begin{align*}
H(x,z)&=\int\Big[(Fg)(x+y+z)-(Fg)(x+y)\Big]dy\\
&=\int\Big[(Fg)(x+y+z)-(Fg)(y+z)\Big]dy+\displaystyle\int\Big[(Fg)(y+z)-(Fg)(x+y)\Big]dy\\
&=\int\Big[(Fg)(x+y)-(Fg)(y)\Big]dy+\displaystyle\int\Big[(Fg)(y+z)-(Fg)(x+y)\Big]dy\\
&=\int\Big[(Fg)(y+z)-(Fg)(y)\Big]dy.
\end{align*}
Finally using (b) we have
\begin{align*}
G(g)(z)&=\int H(x,z)dx\\ &=\int\left\{\int\Big[(Fg)(y+z)-(Fg)(y)\Big]dy\right\}dx \\&=\int\left\{\int\Big[g(y+z)-g(y)\Big]dy\right\}dx\\
&=\int\left\{\int g(z)dy\right\}dx\\
&=g(z).
\end{align*}
In other words, $G: Z^* \to X^*$ is an ‘extension’ operator. By Proposition $2.2$, $Z$ is locally complemented in $X$ and this completes the proof of the theorem.
\end{proof}

\begin{corollary}\label{c4}
 For a Banach space $X$, TFAE:
\begin{enumerate}
\item[(i)]  $X$ is a Hilbert space.
\item[(ii)] For each (closed) subspace $Z$ of $X$, there exists a bounded linear `extension' map $\psi: \textnormal{Lip}_0(Z) \to \textnormal{Lip}_0(X)$.
\item[(iii)] For each (closed) subspace $Z$ of $X$, there exists a bounded linear `extension' $\psi: Z^* \to X^*$.
\item[(iv)]  Each (closed) subspace of $X$ is locally complemented.
\end{enumerate}
\end{corollary}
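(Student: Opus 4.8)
The plan is to prove the four conditions equivalent by establishing the cycle
$$\mathrm{(i)} \Rightarrow \mathrm{(ii)} \Rightarrow \mathrm{(iv)} \Rightarrow \mathrm{(iii)} \Rightarrow \mathrm{(i)},$$
in which three of the four links are immediate quotations of results already in hand and only the first calls for a direct construction. The conceptual point is that essentially all of the analytic depth has already been absorbed into Theorem \ref{t3} and the cited Theorem 1.1, so that the corollary amounts to assembling these with Proposition 2.2.

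For $\mathrm{(i)} \Rightarrow \mathrm{(ii)}$ I would argue directly via orthogonal projection. Assuming $X$ is a Hilbert space and $Z$ a closed subspace, let $P \colon X \to Z$ be the orthogonal projection, so that $P$ is linear with $\|P\|=1$, $P|_Z = \mathrm{id}_Z$, and $P\theta = \theta$ for the distinguished point $\theta = 0 \in Z$. Define $\psi \colon \textnormal{Lip}_0(Z) \to \textnormal{Lip}_0(X)$ by $\psi(g) = g \circ P$. Then $g \circ P$ vanishes at $\theta$, is Lipschitz with $\|g \circ P\| \le \|g\|\,\|P\| = \|g\|$, and satisfies $(g \circ P)|_Z = g$; linearity of $g \mapsto g \circ P$ is clear. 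Hence $\psi$ is a bounded linear extension map, which gives $\mathrm{(ii)}$.

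The three remaining links are quotations. The implication $\mathrm{(ii)} \Rightarrow \mathrm{(iv)}$ is exactly Theorem \ref{t3} applied to each closed subspace $Z$: the existence of a bounded linear extension map $\textnormal{Lip}_0(Z) \to \textnormal{Lip}_0(X)$ forces $Z$ to be locally complemented in $X$. The implication $\mathrm{(iv)} \Rightarrow \mathrm{(iii)}$ is the implication $(1) \Rightarrow (3)$ of Proposition 2.2, converting local complementation of $Z$ into a bounded linear extension $Z^* \to X^*$. Finally, $\mathrm{(iii)} \Rightarrow \mathrm{(i)}$ is precisely the implication $\mathrm{(iii)} \Rightarrow \mathrm{(i)}$ of Theorem 1.1.

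The main --- and in fact the only --- obstacle lies not in the corollary itself but in the two results it invokes: the genuine content resides in Theorem \ref{t3}, where Pelczynski's Banach-limit averaging replaces the nonlinear Lipschitz extension by a linear one, and in Theorem 1.1, which characterizes Hilbert space through the linear extendability of functionals. Granting these, the cycle closes with only the elementary orthogonal-projection verification of $\mathrm{(i)} \Rightarrow \mathrm{(ii)}$ left to supply.
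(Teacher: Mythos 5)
Your proposal is correct and takes essentially the same route as the paper, which likewise disposes of $\mathrm{(i)}\Rightarrow\mathrm{(ii)}$ as the straightforward projection argument and quotes Theorem \ref{t3}, Proposition 2.2, and Theorem 1.1 for the remaining links, differing only in the (immaterial) ordering of the implications. The one point worth noting is that since $\mathrm{(iii)}\Rightarrow\mathrm{(i)}$ of Theorem 1.1 yields only that $X$ is \emph{isomorphic} to a Hilbert space, your orthogonal projection in $\mathrm{(i)}\Rightarrow\mathrm{(ii)}$ should be replaced by an arbitrary bounded linear projection onto $Z$ (available because every closed subspace of a space isomorphic to Hilbert space is complemented), which changes nothing else in the argument.
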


The proof of (i) $\Rightarrow$ (ii) is straightforward whereas (ii) $\Rightarrow$ (iii) follows from Theorem $2.3$. The equivalence (iii) $\Leftrightarrow$ (iv) follows from Proposition $2.2$. Finally, the equivalence (iii) $\Leftrightarrow$ (i) has been noted in Theorem $1.1$.
\begin{corollary}\label{c5}
 A Lipschitz retract is always locally complemented.
\end{corollary}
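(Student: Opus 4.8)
The plan is to reduce the statement to Theorem \ref{t3} by manufacturing, out of a Lipschitz retraction, a bounded linear extension operator between the relevant Lipschitz spaces. Here I read ``Lipschitz retract'' as referring to a (closed) subspace $Z$ of $X$ that is a Lipschitz retract of $X$, since local complementation is a property of subspaces (and this is precisely the reading needed to conclude, as remarked earlier, that $c_{0}$ is locally complemented in $\ell_{\infty}$). Fix a Lipschitz retraction $r:X\to Z$, i.e.\ a Lipschitz map with $r(z)=z$ for every $z\in Z$. Taking the distinguished point to be $\theta=0$, which lies in $Z$ because $Z$ is a subspace, the retraction property forces $r(0)=0$.

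The key step is to define $F:\textnormal{Lip}_0(Z)\to\textnormal{Lip}_0(X)$ by right-composition, $F(f)=f\circ r$, and then to verify the four properties needed to invoke Theorem \ref{t3}. Well-definedness: for $f\in\textnormal{Lip}_0(Z)$ the function $f\circ r$ is real-valued and Lipschitz on $X$, and $(f\circ r)(0)=f(r(0))=f(0)=0$, so $F(f)\in\textnormal{Lip}_0(X)$. Boundedness: the composition estimate $|f(r(x))-f(r(x'))|\le\|f\|\,\|r(x)-r(x')\|\le\|f\|\,\|r\|\,\|x-x'\|$ shows $\|F(f)\|\le\|r\|\,\|f\|$, hence $\|F\|\le\|r\|$. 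Linearity is immediate, since composing on the right with a fixed map is linear in $f$. Finally, $F$ is an extension operator: for $z\in Z$ we have $F(f)(z)=f(r(z))=f(z)$, i.e.\ $F(f)|_{Z}=f$.

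With such an $F$ in hand, Theorem \ref{t3} applies verbatim and yields that $Z$ is locally complemented in $X$, which completes the proof. I do not anticipate any serious obstacle: the only points demanding a moment's care are that the base point $0$ is genuinely fixed by $r$, which is what makes $F(f)$ vanish at $\theta$, and that the Lipschitz constant of the composite is controlled by the product $\|r\|\,\|f\|$. The substantive content is entirely carried by Theorem \ref{t3}; the corollary simply records the observation that a Lipschitz retraction is exactly the nonlinear datum from which a bounded linear extension operator on $\textnormal{Lip}_0$ can be read off by right-composition.
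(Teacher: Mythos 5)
Your proof is correct and is exactly the argument the paper intends: the corollary is stated without proof as an immediate consequence of Theorem~\ref{t3}, the implicit point being that right-composition with the retraction, $F(f)=f\circ r$, gives the required bounded linear extension operator $\textnormal{Lip}_0(Z)\to\textnormal{Lip}_0(X)$. Your verification of well-definedness, linearity, the norm bound $\|F\|\le\|r\|$, and the extension property is complete and matches the paper's route.
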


As a first important consequence of the above corollary, the following Lipschitz analogue of an isomorphic characterisation of Hilbert spaces follows directly from it.

\begin{corollary}\label{c6}
 For a Banach space $X$, TFAE:
\begin{enumerate}
\item[(i)] $X$ is a Hilbert space.
\item[(ii)] For each closed subspace $Z$ of $X$ and each Banach space $Y$, each Lipschitz map on $Z$ and taking values in $Y$ can be extended to a Lipschitz map on $X$ into $Y$.
\end{enumerate}
\end{corollary}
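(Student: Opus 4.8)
The plan is to prove the two implications separately, leaning on the machinery already assembled, in particular Corollaries \ref{c5} and \ref{c4}. The forward implication (i) $\Rightarrow$ (ii) should be routine. Assuming $X$ is (isomorphic to) a Hilbert space via an isomorphism $T\colon X \to H$, I would fix a closed subspace $Z$, let $P$ be the orthogonal projection of $H$ onto $T(Z)$, and transport it back by setting $r = T^{-1}\,P\,T \colon X \to Z$. This $r$ is a Lipschitz retraction of $X$ onto $Z$: it fixes $Z$ pointwise (since $Tz \in T(Z)$ is fixed by $P$) and has Lipschitz constant at most $\|T\|\,\|T^{-1}\|$. Given any Banach space $Y$ and any Lipschitz map $f\colon Z \to Y$, the composite $\tilde f = f \circ r \colon X \to Y$ is then the desired Lipschitz extension of $f$, with Lipschitz constant at most $\mathrm{Lip}(f)\cdot\mathrm{Lip}(r)$. (In the genuinely Hilbertian case one simply takes $r = P$ and obtains a norm-one retraction.)

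For the reverse implication (ii) $\Rightarrow$ (i), the key observation, and really the only clever step, is to specialise the vector-valued extension hypothesis to the identity map. Given a closed subspace $Z$, take $Y = Z$ and $f = \mathrm{id}_Z \colon Z \to Z$, which is plainly $1$-Lipschitz. By (ii) it extends to a Lipschitz map $r\colon X \to Z$ with $r|_Z = \mathrm{id}_Z$; that is, $r$ is a Lipschitz retraction of $X$ onto $Z$, so $Z$ is a Lipschitz retract of $X$. Corollary \ref{c5} then yields that $Z$ is locally complemented in $X$, and since $Z$ was an arbitrary closed subspace, every closed subspace of $X$ is locally complemented.

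Finally, I would invoke the equivalence already recorded in Corollary \ref{c4}, where condition (iv) (``each closed subspace is locally complemented'') is equivalent to condition (i) (``$X$ is a Hilbert space''), via the chain (iv) $\Leftrightarrow$ (iii) $\Leftrightarrow$ (i) noted there. This closes the loop and completes the proof. I do not anticipate a serious obstacle: all the hard analytic content (the Banach-limit averaging of Theorem \ref{t3}, hence the local-complementation conclusion of Corollary \ref{c5}, together with the characterisation of Hilbert space behind Theorem $1.1$) has already been carried out. The entire novelty lies in reducing the vector-valued extension property to the existence of a Lipschitz retraction simply by feeding in the identity map, after which the earlier corollaries do the remaining work.
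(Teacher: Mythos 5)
Your proposal is correct and follows essentially the same route as the paper: the key step in both is to feed the identity map $\mathrm{id}_Z$ into condition (ii) to obtain a Lipschitz retraction of $X$ onto $Z$, after which the earlier results take over. The paper phrases the conclusion as the induced bounded linear extension operator $f\mapsto f\circ r$ from $\textnormal{Lip}_0(Z)$ to $\textnormal{Lip}_0(X)$ and cites Corollary \ref{c4}(ii)$\Rightarrow$(i), while you route through Corollary \ref{c5} and Corollary \ref{c4}(iv)$\Leftrightarrow$(i); these are the same argument in different packaging.
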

Indeed, condition (ii) applied to each closed subspace $Z$ of $X$ with $Y=Z$ and the identity map on $Y$ yields a bounded linear extension map $\psi: \textnormal{Lip}_0(Z)\to \textnormal{Lip}_0(X)$ which, by virtue of Corollary $2.4$, yields that $X$ is a Hilbert space.
\begin{remark}
 (i)  The converse of Corollary $2.5$ regarding the existence of a locally complemented subspace which is not a Lipschitz retract of the ambient space is intimately connected with the famous open problem whether a Banach space is always a Lipschitz retract of its bidual (see \cite{BL}, pp.$183$). In an important work, Kalton \cite{K2} solves the problem in the negative in the nonseparable case. However for separable Banach spaces, the problem remains
open. On the other hand, it is not difficult to see that every Banach space is locally complemented in its bidual which, however, may not be complemented as is testified by the space $c_{0}$ as a (closed) subspace of (its bidual) $\ell_{\infty}$.

(ii) A Banach space $X$ is said to be $\lambda$-injective ($\lambda\ge 1$) if each $X$-valued continuous linear mapping $f$ on any subspace of a given Banach space can be extended to a continuous linear  map $g$ on the whole space such that $\|g\|\le \lambda \|f\|$. It can be shown that a $\lambda$-injective Banach space is a $\lambda$-absolute Lipschitz retract $(\lambda-ALR)$, i.e., it is a $\lambda$-Lipschitz retract of every metric space containing it. Further, an application of Theorem $1.1$ yields that for a Banach space $X$ with the latter property, its bidual $X^{**}$ is $\lambda$-injective. Indeed, the given condition combined with the canonical inclusion of $X$ into $C(B_{X^*}, w^*)$ produces a continuous linear extension operator $ \textnormal{Lip}_0(X)\to \textnormal{Lip}_0(C(B_{X^*}, w^*))$. By Theorem $1.1$, we get a continuous linear extension map $\psi: X^*\longrightarrow (C(B_{X^*},w^*))^*$ with $\parallel\psi\parallel \leq \lambda$. Taking conjugates yields a continuous linear map $\psi^*: C(B_{X^*}, w^*)^{**}\to X^{**}$ which when restricted to  $X^{**}$ yields $X^{**}$ as a $\lambda$-Lipschitz retract of $C(B_{X^*}, w^*)^{**}$. Finally, since the latter space is always $1$-injective (see \cite{AK}, Proposition 4.3.8), it follows that $X^{**}$ is $\lambda$-injective. This renders Corollary 4.11 ((iii)$\Leftrightarrow$ (iv)) of \cite{JS} incorrect where it is wrongly claimed that $X$ is $\lambda$-injective. Here the authors have based their argument on an incorrect use of corollary 3.3 \cite{FK}.In fact, as can be seen from this argument, for a $\lambda$-absolute Lipschitz retract to be $\lambda$-injective, it has to be assumed that the space in question is also an ultrasummand, that is a Banach space which is complemented in its bidual.That the assumption of being an ultrasummand cannot be dispensed with follows by considering the Banach space $c_{0}$. Further, Kalton’s example \cite{K2} referred to in Remark $2.7 (i)$ above shows that a Banach space $X$ with an injective bidual may not be an $ALR$.

The question regarding the exact description of Banach spaces $X$ resulting from (ii) in the above corollary with $Z=\ell_{2}$ is an important open problem belonging to this circle of ideas. Maurey's extension theorem as quoted in the introduction says that the stated condition holds for linear maps on type 2 Banach spaces and taking values in a Hilbert space. To
the best of our knowledge, it seems to be unknown if the Lipschitz analogue of this
result actually holds; more precisely, if Lipschitz mapping defined on subspaces of type 2 Banach spaces and taking values in a Hilbert space admit a Lipschitz extension on the whole space. On the other hand, and as a partial converse to this statement, it is possible to show that under the stated condition, the space in question has type $p$ for $p<2$. This follows upon combining certain results of Johnson and Lindenstrauss with the following fact ($\ast$) which can be derived from the given condition:\vspace*{0.5em}

($\ast$) Given $X$, a Banach space of type 2, there exists a constant $c>0$ such that for each finite subset $S$ of $X$ and each Lipschitz function $f:S\longrightarrow\ell_{2}$, there exists a Lipschitz function $g:X\longrightarrow\ell_{2}$ extending $f$ such that $Lip(g)\leq cLip(f)$.\vspace*{0.5em}

Complete details of the proof of $(*)$ shall appear elsewhere.
Further, it is conjectured that such spaces are actually type 2 Banach spaces. Surprisingly, the linear analogue of the conjecture also remains open. On the other hand, using contractions in place of Lipschitz maps in this assertion, results in $X$ being (isometrically) a Hilbert space (see \cite{BL}, Theorem $2.11$ and the remark following it).

(iii) In view of its relevance to the theme of a recent  work by the author \cite{S2} devoted to certain aspects of nonlinear analysis in Banach spaces, Theorem 2.3 (and some of its corollaries) also appear in the said volume with proofs of some of these results having since yielded to slight modifications/improvements as included in the present paper.
\end{remark}

The next corollary deals with the Lipschitz analogue of a familiar result involving the description of Banach spaces $X$ such that every closed convex subset of $X$ is a non-expansive retract of $X$. That this property holds for Hilbert spaces follows from the easily-checked observation that the `nearest point projection’ $P_{C}$ corresponding to a given closed subset $C$ of $X$ is indeed a non-expansive (1-Lipschitz) map which is the identity map on $C$. Here $P_{C}$ is defined to be the map given by:
$$P_{C}(x)=\inf \Big\{ \big\|x-y\big\|: y\in C\Big\}, \quad x\in X .$$
The fact that the stated property holds precisely for Hilbert spaces is a famous theorem due originally to Reich \cite{R}. The next result provides the isomorphic analogue of the latter statement involving Lipschitz retracts. The proof follows exactly on similar lines as in the previous corollary.
\begin{corollary}\label{c7}
 A Banach space has the property that each closed and convex subset of $X$ is a Lipschitz retract of $X$ if and only if $ X$ is isomorphic to a Hilbert space.
\end{corollary}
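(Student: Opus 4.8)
The plan is to establish the two implications separately, leaning on the machinery already assembled in Corollaries 2.4 and 2.5. For the sufficiency direction I would start from a linear isomorphism $T \colon X \to H$ onto a Hilbert space $H$ and simply transport the metric (nearest-point) projection. Given a closed convex set $C \subseteq X$, its image $T(C)$ is again closed and convex, so the nearest-point projection $P_{T(C)} \colon H \to T(C)$ is a well-defined nonexpansive (hence Lipschitz) retraction fixing $T(C)$ pointwise. Setting $\rho = T^{-1} \circ P_{T(C)} \circ T$ then produces a Lipschitz retraction of $X$ onto $C$, with Lipschitz constant controlled by $\|T\|\,\|T^{-1}\|$. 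This conjugation is exactly the mechanism by which the isometric statement (Reich's theorem) is relaxed to the isomorphic one.

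The necessity direction is where the corollaries carry the weight. The key observation is that every closed linear subspace of $X$ is in particular a closed convex subset, so the hypothesis forces each closed subspace $Z$ of $X$ to be a Lipschitz retract of $X$. By Corollary 2.5, each such $Z$ is then locally complemented in $X$. Since this holds for every closed subspace simultaneously, the implication (iv) $\Rightarrow$ (i) of Corollary 2.4 applies and yields that $X$ is isomorphic to a Hilbert space. This is precisely the ``similar lines'' alluded to by the author: just as Corollary 2.6 specialized the extension property to subspaces by taking $Y = Z$ together with the identity map, here one specializes the convex-retract property to the subcollection consisting of closed subspaces.

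I do not anticipate a genuine obstacle in the argument, since the substantive content has been front-loaded into Theorem 2.3 and its corollaries. The one point deserving care is the sufficiency direction, where I must check that conjugating the nonexpansive Hilbert-space projection by $T$ genuinely produces a Lipschitz map that fixes $C$ pointwise (not merely a retraction onto some larger set) and that the resulting Lipschitz constant is finite. The only other subtlety worth flagging is to confirm that ``Lipschitz retract'' in the hypothesis is meant in the global sense required by Corollary 2.5, so that the chain Corollary 2.5 $\to$ Corollary 2.4 transfers verbatim.
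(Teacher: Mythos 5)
Your proof is correct and follows essentially the same route the paper intends: the necessity direction specializes the hypothesis to closed subspaces and runs through Corollary 2.5 and Corollary 2.4 exactly as in the proof of Corollary 2.6, while the sufficiency direction conjugates the nearest-point projection by the isomorphism onto a Hilbert space. You have in fact supplied more detail than the paper, which only says the argument ``follows exactly on similar lines as in the previous corollary.''
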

\begin{comment}
An interesting generalisation of the above corollary can be provided by considering the more general notion of a $K$-random projection which subsumes the notion of a $K$-Lipschitz retraction. L. Ambrosio and D. Puglisi \cite{AP} show that the existence of a $K$-random projection leads to the existence of a bounded linear extension operator between appropriate spaces of Lipschitz functions
on closed subsets of (pointed) metric spaces. In particular, there exists a bounded, linear extension operators $\textnormal{Lip}_0(A)\longrightarrow\textnormal{Lip}_0(M)$ involving doubling metric spaces $M$ and closed subspaces $A$ of $M$.
\end{comment}

We now comment on the question involving the existence of an  'extension' operator $\psi: \textnormal{Lip}_0(A) \to \textnormal{Lip}_0(M)$, where A is now chosen to be an arbitrary subset of the metric space M. If such is the case, we say that the metric space M has the ‘simultaneous Lipschitz extension property’ (SLEP). An answer to this question turns out to be highly nontrivial, especially in the case of $X$ being a non-Banach metric space.  We note that the existence of an `extension'
operator entails (i) extendability of Lipschitz mappings from arbitrary subsets to the entire space (ii) a suitable choice of the extension which may be effected in a bounded linear manner. In the case of Banach spaces, the existence of such a choice forces the space to be finite dimensional, as we shall see shortly.  However, restricting to only part (i) of the extension procedure with mappings consisting of contractions and taking values inside  $\ell_2$ results in $X$ being a Hilbert space, as was noted in the Comments following Remarks $2.7$.  This is a well known converse of Kirszbraun's theorem \cite{Ki}.

Beyond the class of Banach spaces, this question has been a subject of intensive research spanning different areas of mathematics including, in particular, geometry and group theory. There are interesting examples of situations where such extensions always exist which include, for example,  $\mathbb R^n$ (with respect to any norm), the Heisenberg group, doubling metric spaces, metric trees  and certain classes of Riemannian manifolds of bounded geometry. On the other hand, making use of some deep but well known  facts from the local theory of Banach spaces, it is possible to show that the extension procedure for arbitrary subsets breaks down for all infinite dimensional Banach spaces, including of course, infinite dimensional Hilbert spaces where the situation was seen to be extremely pleasant from the viewpoint of the extension procedure involving linear functionals and Lipschitz mappings acting on linear subspaces. We provide below a sketch of the proof of this assertion which is a simplification of the proof already available in the literature (\cite{BB2}).

\begin{theorem}\label{t8}
 A Banach space $X$ is finite dimensional if and only if $X$ has (SLEP).
\end{theorem}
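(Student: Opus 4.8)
The plan is to treat the two implications separately, the forward one being soft and the converse resting on a single quantitative input from the local theory of Banach spaces.

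For the sufficiency of finite-dimensionality I would argue as follows. If $\dim X<\infty$ then $X$ is linearly isomorphic to $\ell_2^{\,\dim X}$ and is therefore a doubling metric space. By the theorem of Ambrosio and Puglisi recorded in the comment preceding the statement (\cite{AP}), every doubling metric space $M$ carries a bounded linear extension operator $\textnormal{Lip}_0(A)\to\textnormal{Lip}_0(M)$ for each closed subset $A$. A general subset $A$ reduces to this case because each $g\in\textnormal{Lip}_0(A)$ extends uniquely to $\overline{A}$ with unchanged Lipschitz constant, so that the restriction $\textnormal{Lip}_0(\overline{A})\to\textnormal{Lip}_0(A)$ is an isometric isomorphism; pre-composing its inverse with the operator furnished for $\overline{A}$ gives the operator for $A$. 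Hence $X$ enjoys (SLEP).

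For the converse I would argue contrapositively: given an infinite-dimensional $X$, I will exhibit a single subset on which no bounded linear extension operator exists. For $A\subseteq X$ let $\lambda(A,X)$ denote the least norm of a bounded linear extension operator $\textnormal{Lip}_0(A)\to\textnormal{Lip}_0(X)$ (with $\lambda(A,X)=\infty$ if none exists); by Proposition 2.2 applied to the closed subspace $\mathcal F(A)$ of the Lipschitz-free space $\mathcal F(X)=\Im(X)$, whose dual is $\textnormal{Lip}_0$, finiteness of $\lambda(A,X)$ is exactly local complementation of $\mathcal F(A)$ in $\mathcal F(X)$. Two monotonicity observations organise the construction. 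First, if $A\subseteq E\subseteq X$, then composing any extension operator into $\textnormal{Lip}_0(X)$ with the norm-one restriction $\textnormal{Lip}_0(X)\to\textnormal{Lip}_0(E)$ shows $\lambda(A,E)\le\lambda(A,X)$, so lower bounds computed in a subspace transfer upward to $X$. Secondly, if $A_n\subseteq A$ admits an inner extension operator $\textnormal{Lip}_0(A_n)\to\textnormal{Lip}_0(A)$ of norm at most $C$, then composition gives $\lambda(A_n,X)\le C\,\lambda(A,X)$.

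The quantitative heart, and the step I expect to be the main obstacle, is the estimate of Brudnyi and Brudnyi \cite{BB2} that the linear Lipschitz extension constants of finite subsets of Euclidean space are \emph{not} uniformly bounded: there are finite sets $S_n\subseteq\ell_2^{\,n}$ with $\lambda(S_n,\ell_2^{\,n})\ge c_n\to\infty$. Feeding this through Dvoretzky's theorem---the prototypical local-theory fact, producing for each $n$ a subspace $E_n\subseteq X$ that is $2$-isomorphic to $\ell_2^{\,n}$---I obtain finite sets $A_n\subseteq E_n$ with $\lambda(A_n,E_n)\gtrsim c_n$, whence $\lambda(A_n,X)\to\infty$ by the first monotonicity remark. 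Since $\lambda$ is scale-invariant, I would rescale and place the $A_n$ in geometrically separated spherical shells $\{\rho_n/2\le\|x\|\le\rho_n\}$ with $\rho_{n+1}\le\rho_n/10$, about a common base point $\theta=0$, and set $A=\{0\}\cup\bigcup_nA_n$. The geometric separation makes the ``extend by the base-point value'' map a bounded linear inner extension operator $\textnormal{Lip}_0(A_n)\to\textnormal{Lip}_0(A)$ of norm at most an absolute constant, since for $a\in A_n$ one has $|f(a)|\le\|f\|\rho_n$ while any point of a differently-scaled shell lies at distance $\gtrsim\rho_n$ from $a$. If $\lambda(A,X)$ were finite, the second monotonicity remark would then force $\sup_n\lambda(A_n,X)<\infty$, contradicting $\lambda(A_n,X)\to\infty$; hence $\lambda(A,X)=\infty$ and $X$ fails (SLEP). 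The points to verify with care are the lower bound of \cite{BB2} and the scale-invariance together with the routine base-point bookkeeping in the localisation; once these are in place, the blow-up of the finite pieces converts any single global operator on $A$ into an impossible uniform bound.
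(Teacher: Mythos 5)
Your argument is correct and shares the paper's skeleton --- Dvoretzky's almost-Euclidean sections combined with the Brudnyi--Brudnyi lower bound $\lambda(\ell_2^n)\ge c\,n^{1/8}$ --- but it differs in two substantive steps, and both differences matter. First, to transport the lower bound from the Euclidean section $E_n$ up to $X$ you use the elementary monotonicity $\lambda(A,E_n)\le\lambda(A,X)$, obtained by following an extension operator into $\textnormal{Lip}_0(X)$ with the norm-one restriction to $E_n$; the paper instead goes through its retraction lemma (c), bounding $\lambda(X_n)\le 2\lambda(X)$ on the strength of the assertion that $X_n$ is a $2$-Lipschitz retract of $X$. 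That assertion is not justified there: a subspace $2$-isomorphic to $\ell_2^n$ need not be uniformly complemented (its relative projection constant can grow like $\sqrt{n}$, and a Lipschitz retraction onto a convex set can be convexified into a linear projection of no larger norm), so your restriction trick replaces the weakest link of the published argument with something unconditionally valid. Second, the paper proves only that $\sup_A\lambda(A,X)=\infty$ and asserts ``in particular'' that some single subset admits no bounded linear extension operator; since no uniform-boundedness principle applies across the varying spaces $\textnormal{Lip}_0(A)$, this deduction genuinely requires a gluing step, and your geometrically separated shells $\{\rho_n/2\le\|x\|\le\rho_n\}$ together with the extend-by-the-base-point-value inner operators of uniformly bounded norm supply exactly the missing construction. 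For the easy direction the paper quotes Whitney's theorem for finite-dimensional spaces while you quote the Ambrosio--Puglisi doubling-space result from the Comment, plus the isometric passage from a set to its closure; both routes are legitimate. The items you leave to the reader --- scale and translation invariance of $\lambda(\cdot,X)$, the bounded-factor bookkeeping when adjoining the base point $0$ to each $A_n$, and the fact that the Brudnyi--Brudnyi witnesses may be taken bounded (indeed finite) so that they fit in the shells --- are routine and cost only absolute constants.
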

\begin{proof}[Sketch of the proof]
Let us introduce certain numerical parameters that will be used to quantify the existence of the extension operator mentioned above. To this end, for each subset $A$ of a metric space $M$, denote by $Ext(A, M)$ the set of `extension' operators $\psi: \textnormal{Lip}_0(A) \to \textnormal{Lip}_0(M)$, i.e.,
$\psi(g)|_{Y}= g$ for each $g\in \textnormal{Lip}_0(A)$. We set
$$\lambda(A, M)=\sup\Big\{\inf \big\|T\big\|: T \in Ext(A, M), \, A \subset M\Big\}$$
and define the `global Lipschitz extension constant' of $M$ to be the number
$$\lambda (M)=\sup\big\{\lambda(A, M), \, A \subset M\big\}.$$
We claim that $\lambda(X)=\infty$ for each infinite dimensional Banach space $X$. In particular, this would imply that each infinite dimensional Banach space admits a subset $A$ without the `simultaneous extension property'. We shall need the following tools to prove our statement.
\begin{enumerate}
\item[(a)] Dvoretzky's spherical sections theorem: Each infinite dimensional Banach space $X$ contains for each $n$, an $n$-dimensional subspace $X_n$ which is 2-isometric with $l_2^n$.
\item[(b)] There exists $c > 0$ (independent of $n$) such that $\lambda(l_2^n)\ge c n^{1/8}$.
\item[(c)] If $r:B\to A$ is a $c$-Lipschitz retraction of $B$ onto $A$, then $\lambda(A)\le c\lambda(B)$.
\end{enumerate}
Whereas (a) is already well known as a highly nontrivial fact from the local theory of Banach spaces (See \cite{BL}, Theorem $12.10$), the proof of (b) hinges on another deep result involving an alternative description of $\lambda(M)$ given by the following formula  (See \cite{BB2}):
$$\lambda(M)=\sup \big\{v(M,Z): Z \in FD\big\}.$$
Here $FD$ denotes the class of all finite dimensional Banach spaces and $v(M,Z)$ is given by
$$v(M,Z)=\sup \big\{v(A, M,Z): A \subset M\big\}.$$
where $v(A, M,Z)$ is defined to be the infimum of all constants $c > 0$ such that each $g \in \textnormal{Lip}_0(A, Z)$ admits an extension to $f \in \textnormal{Lip}_0(M, Z)$ such that
$$\big\|f\big\|_{\textnormal{Lip}(M, Z)}\le c \big\|g\big\|_{\textnormal{Lip}(A, Z)}.$$
Finally, considering that the proof of (c) does not seem to be explicitly available in the existing literature, we include a proof of it as follows. Let  $S\subset A$ and choose $T\subset B$   with $r(T)=S$. Now given a Lipschitz map $f:S\to \mathbb R$, we see that  $(f \circ r)|_{T}$ is a Lipschitz function on $T$. Fix $\varepsilon >0$ and choose an extension operator $E:\textnormal{Lip}_0(T)\to \textnormal{Lip}_0(B)$ with $\|E\|\le \big(\lambda(T,B)+\varepsilon\big)$. The assignment $E_{1}(f)=E\big ( (f \circ r)|_{T}\big)|_{A}$, then defines a continuous linear map
$$E_{1}: \textnormal{Lip}_0(S)\to \textnormal{Lip}_0(A).$$
From the inequality:
$$\big\|(f \circ r)|_{T}\big\|_{\textnormal{Lip}(T)}\le\|r\|_{\textnormal{Lip}(B,A)} \big\|f\big\|_{\textnormal{Lip}(S)},$$
we conclude that
$$\big\|E_{1}\big\|\le \|r\|_{\textnormal{Lip}(B,A)}\|E\|.$$
This gives:  $$\lambda(S,A)\le \|E_1\|\le \|r\|_{\textnormal{Lip}(B,A)}\|E\|\le \|r\|_{\textnormal{Lip}(B,A)} \big(\lambda(T,B)+\varepsilon\big).$$
Taking supremum on both sides and letting $\varepsilon\to 0$ yields the desired inequality.

Now to prove the desired assertion, for each $n> 0$, choose $X_n$ according to (a). Thus there exists a linear isomorphism
$\phi_n: \ell_2^n \to X_n$ such that $\|\phi_n\|\|\phi^{-1}_n\|< 2$. Fix $\varepsilon > 0$ and let $A \subset \ell_2^n$. There exists $ E \in \text{Ext}\big(\phi_n(A), X_n \big)$ such that $\|E\|<\lambda\big(\phi_n(A), X_n\big)+\varepsilon$. Consider the mappings $\Psi_n: \textnormal{Lip}(X_n) \to \textnormal{Lip}(\ell_2^n)$ and $L_n: \textnormal{Lip}(A) \to \textnormal{Lip}(\phi(A))$ given by
$$\Psi_n(f)=f\circ \phi_n\quad\text{and}\quad L_n(g)= g \circ \phi^{-1}_n,~ f \in \textnormal{Lip}(X_n), g \in \textnormal{Lip}(A).$$
Then $F$ given by $F=\Psi_n \circ E \circ L_n$ defines a linear map $F: \textnormal{Lip}(A) \to \textnormal{Lip}(\ell_2^n)$ such that
$$\big\|F\big\|\le \big\|\Psi_n\big\|\big\|E\big\|\big\|L_n\big\|\le \big\|\phi_n\big\|\big\|E\big\|\big\|\phi^{-1}_n\big\|\le 2\big\|E\big\|.$$
Using the above estimates, we get
$$\lambda \big(A, \ell_2^n\big)\le \big\|F\big\|\le 2\big\|E\big\|< 2 \big(\lambda (\phi_n(A), X_n)+\varepsilon \big)\le 2\big(\lambda ( X_n)+\varepsilon \big).$$
Letting $\varepsilon \to 0$ gives $\lambda(A, \ell_2^n)\le 2\lambda ( X_n)$.
Applying (c) to (the $2$-Lipschitz retract) $X_n$ of $X$, it follows that $\lambda(X_n)\le 2\lambda (X)$. Combined with the above estimate, this yields
$$\lambda(X) \ge \dfrac{1}{2} \lambda(X_n) \ge \dfrac{1}{4}\,\lambda \big(A, \ell_2^n\big).$$
Taking supremum over $A \subset \ell_2^n$ and using (b), we get
$$\lambda(X) \ge \dfrac{1}{4}\,\lambda \big(\ell_2^n \big)\ge \dfrac{c}{4}\, n^{1/8}.$$
Since $n$ is arbitrary, it follows that $\lambda(X)=\infty$. Finally, the assertion that $\lambda(X)<\infty$ for finite dimensional Banach spaces is a famous result due to Whitney (see \cite{St}, Chapter 6.2.3).
\end{proof}
\begin{remark}
(i) In contrast to Theorems $2.3$ and $2.10$ involving the existence /nonexistence of extension operators from spaces of Lipschitz functions on (closed) subspaces and arbitrary subsets of Banach spaces respectively, Kopecka \cite{Ko} shows that in the latter case involving arbitrary subsets $S$ of a Hilbert space $H$, there exist `extension' operators $\psi: \textnormal{Lip}(S,H)\to \textnormal{Lip}(B,H)$
preserving the Lipschitz norm and acting between spaces of vector-valued bounded Lipschitz functions which are continuous but not  linear. Here the spaces of Lipschitz functions are equipped with the sup-norm.

(ii) In light of Theorem $2.3$ and the corollaries following from it, a natural question arises regarding the extent of the validity of these conclusions under the assumption that for a subspace $Z$ of $X$,
the extension operator  $F: \textnormal{Lip}_0(Z) \longrightarrow \textnormal{Lip}_0(X)$ is assumed to be Lipschitz rather than linear. To the best of our understanding, the question does not seem to have been addressed in the literature so far.  Accordingly, we pose it as an open problem:

Question: Let $X$ be a Banach space. Does the existence of a Lipschitz extension operator $F: \textnormal{Lip}_0(Z) \longrightarrow \textnormal{Lip}_0(X)$ for each subspace $Z$ of $X$ yield that $X$ is a Hilbert space?

We note that for a subspace $Z$ of $X$ for which $\textnormal{Lip}_0(Z)$  is separable, the answer is in the affirmative. This follows from an important theorem of Godefroy and Kalton \cite{GK} to the effect that given Banach spaces $X$ and $Z$ with $Z$ separable and a bounded linear operator $T:X \longrightarrow Z$, the existence of a Lipschitz right inverse $S:Z\longrightarrow X$ of $T$ yields the existence of a bounded linear right inverse $L:Z \longrightarrow X$ of $T:T\circ L=Id_Z$. Indeed, consider the bounded linear map $\psi: \textnormal{Lip}_0(X) \longrightarrow \textnormal{Lip}_0(Z)$ given by: $\psi(f)=f_{\vert_Z}$.  Now the presumed existence of
$F: \textnormal{Lip}_0(Z) \longrightarrow \textnormal{Lip}_0(X)$  as a Lipschitz extension operator gives that $F$ is a Lipschitz right inverse of $\psi$. By the Godefroy-Kalton theorem just stated, we get a bounded linear right inverse $G$  of $\psi$, i.e., $G: \textnormal{Lip}_0(Z) \longrightarrow \textnormal{Lip}_0(X)$ is a bounded linear map such that $\psi \circ G=Id_{Lip_0(Z)}$. In other words, $G$ is a bounded linear extension operator and so we are in the situation of Theorem 2.3 to arrive at the desired conclusion.
\end{remark}

We conclude with a brief discussion of how, on the one hand, the parameter $\lambda(M)$ is related to the bounded approximation property (BAP) of the so called free Banach space $\Im (M)$ associated with the metric space $M$ and, on the other, how it is related to its vector-valued analogue $\lambda(M,Z)$, where Z is a Banach space. To this end, we briefly describe (without proofs) the Lipschitz free Banach space $\Im (M)$ over the metric space M which is defined as follows (\cite{GK}; see also \cite{{BB1},{wea}}).

It turns out that $\textnormal{Lip}_0(M)$ is a dual space, with its canonical predual being the closed linear span of the Dirac measures (of points of M) inside the Banach space $l_{\infty}(B)$ of bounded functions on $B$, the closed unit ball of $\textnormal{Lip}_0(M)$. The latter space, denoted by $\Im(M)$, shall be called the free space of $M$.
Further, given $f \in \textnormal{Lip}_{0}(M, X)$, there exists a linear operator $T_f : \textnormal{Lip}_{0}(M) \rightarrow X$ such that
\begin{enumerate}
\item[(a)] $T_{f}|_{S} =f$
\item[(b)] $T_{f+ g}|_{S} =T_{f}|_{S} + T_{g}|_{S}$
\item[(c)] $ \| T _{f} \| \leq \| f \|_{  \textnormal{Lip}_{0}(M, X)   } $.
\end{enumerate}
It turns out that a Banach space X has (BAP) if and only if $\Im(X)$ has it. Godefroy \cite{G} proves that $\Im(M)$ has (BAP) if $\lambda (M) < \infty$ and asks if the converse is also true.
It turns out that the answer would be negative if the following question could be settled in the affirmative.

QUESTION: Is it true that $\lambda(M)=\infty$, where $M$ is the closed unit ball of the dual of an infinite dimensional Banach space equipped with the weak*-topology?

We conjecture that the answer is in the affirmative. In \cite{BB1}, the authors go a step further to assert the possible validity of the above statement in the more general case of infinite dimensional compact metric spaces. (Open Problem (f) in Section 4).

In continuation of the theme of this paper involving the deep connection between nonlinear theory of Banach spaces and their geometric structure, we conclude by including the proof of a special case of a result which is conjectured to be true for all Banach spaces. The conjecture pertains to the numerical parameters $\lambda(M)$ introduced in the proof of Theorem 2.9 and its relation with $\lambda(M,Z)$ which is defined analogously for a pair $(M,Z)$ consisting of a metric space $M$ and a Banach space $Z$ by the formula
\begin{equation*}
\lambda(M,Z)= \sup \left\lbrace \lambda (A,M,Z) \ ; \ A \subset M \right\rbrace
\end{equation*}
where
\begin{equation*}
\lambda (A,M,Z)= \inf \left\lbrace \| T\| \  ; \ T \in \text{Ext}(A,M,Z) \right\rbrace
\end{equation*}

and $\text{Ext}(A,M,Z)$ denotes the set of `extension' operators $\psi : \text{Lip}_0(A,Z) \rightarrow \text{Lip}_0(M,Z)$.

It is conjectured that $\lambda(M)= \lambda(M,Z)$ for each Banach space $Z$. We show that the conjecture holds for a class of Banach spaces that includes the class of ultrasummands, i.e., Banach spaces which are complemented in their biduals.

\begin{theorem}
Let $M$ be a metric space and $Z$ a Banach space which is a Lipschitz retract of its bidual. Then $\lambda(M)= \lambda(M,Z)$.
\end{theorem}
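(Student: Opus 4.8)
The plan is to prove the two inequalities $\lambda(M)\le\lambda(M,Z)$ and $\lambda(M,Z)\le\lambda(M)$ separately, with the hypothesis on $Z$ entering only in the second. The first holds for \emph{every} nonzero Banach space and needs no hypothesis: fix a unit vector $z_0\in Z$ and, by Hahn--Banach, a norm-one $\phi\in Z^*$ with $\phi(z_0)=1$. Given any $E\in\mathrm{Ext}(A,M,Z)$, the assignment $g\mapsto\phi\circ E(g\,z_0)$ is a scalar extension operator in $\mathrm{Ext}(A,M)$ of norm at most $\|E\|$, since $t\mapsto tz_0$ and $\phi$ are linear and norm-preserving (resp.\ norm-one) and $\phi\circ(g\,z_0)|_A=g$. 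Taking infima and then the supremum over $A\subset M$ gives $\lambda(M)\le\lambda(M,Z)$.

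For the reverse inequality I would first isolate the purely dual-space phenomenon: for every dual Banach space $Y=W^*$ one has $\lambda(M,Y)\le\lambda(M)$. Fix $\varepsilon>0$ and, for each $A\subset M$, a scalar extension operator $T$ with $\|T\|\le\lambda(M)+\varepsilon$. For $g\in\mathrm{Lip}_0(A,W^*)$ and $w\in W$ the scalarization $\langle g(\cdot),w\rangle$ lies in $\mathrm{Lip}_0(A)$ with norm at most $\|w\|\,\|g\|$, so one may define $\widetilde T(g)(x)\in W^*=Y$ by $\langle\widetilde T(g)(x),w\rangle=T(\langle g(\cdot),w\rangle)(x)$. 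Because the predual $W$ separates $Y$, this determines $\widetilde T(g)(x)$ unambiguously; linearity of $T$ and the Lipschitz estimate then show that $\widetilde T$ is a linear operator into $\mathrm{Lip}_0(M,Y)$ with $\|\widetilde T\|\le\|T\|$, and $T(\langle g(\cdot),w\rangle)|_A=\langle g(\cdot),w\rangle$ shows $\widetilde T(g)$ extends $g$. Hence $\lambda(A,M,Y)\le\lambda(M)+\varepsilon$, and letting $\varepsilon\to0$ and supping over $A$ gives $\lambda(M,Y)\le\lambda(M)$. Applied to $Y=Z^{**}$ this produces a linear extension operator into $Z^{**}$ of norm essentially $\lambda(M)$.

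It then remains to descend from $Z^{**}$-valued to $Z$-valued extension operators, and this is where the hypothesis on $Z$ is used. Let $\rho\colon Z^{**}\to Z$ be a Lipschitz retraction with $\rho|_Z=\mathrm{id}_Z$. When $Z$ is actually complemented in $Z^{**}$ (the ultrasummand case), $\rho$ may be taken linear and the descent is immediate: $g\mapsto\rho\circ\widetilde T(\iota\circ g)$ is linear, lands in $Z$, and extends $g$, because $\widetilde T(\iota\circ g)$ already takes values in $Z$ on $A$, where $\rho$ acts as the identity. This yields $\lambda(M,Z)\le\lambda(M)$ and, with the first inequality, the desired equality.

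The genuine difficulty is the general Lipschitz-retract case (for instance $Z=c_0\subset\ell_\infty=Z^{**}$), where $\rho$ is only Lipschitz, so the naive postcomposition $g\mapsto\rho\circ\widetilde T(\iota\circ g)$ is an extension operator of the right norm but is \emph{nonlinear}. I would attempt to restore additivity by averaging the retraction against a translation-invariant mean $m$ on $\ell_\infty(Z^{**})$ --- exactly the device used in the proof of Theorem~2.3 --- setting $\langle R(v),\zeta\rangle=m_u\langle\rho(u+v)-\rho(u),\zeta\rangle$ for $v\in Z^{**}$, $\zeta\in Z^*$; invariance of $m$ forces $R$ to be additive, hence linear, with $\|R\|\le\mathrm{Lip}(\rho)$. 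The main obstacle is precisely that this averaging does not by itself produce a linear projection onto $Z$: the mean of a bounded $Z$-valued function lands a priori in $Z^{**}$ rather than in $Z$ (equivalently, $R$ need neither fix $Z$ nor map into $Z$), reflecting the non-complementation of $c_0$ in $\ell_\infty$. Overcoming this is the crux, and I expect it to require combining the averaging with a weak$^*$-compactness argument over finite subsets $A$ --- using that $\mathrm{Lip}_0(M,Z^{**})$ is a dual space, so that norm-bounded sets of extension operators are weak$^*$-compact --- together with the retraction property of $\rho$ to pull the resulting limit back into $Z$. This is the step I expect to be the most technically demanding.
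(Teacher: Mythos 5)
Your first inequality is exactly the paper's argument (scalarize a $Z$-valued extension operator through a norm-one functional peaking at a unit vector), and your construction of the $Z^{**}$-valued extension operator is the paper's construction in different clothing: the paper forms $\hat{T}(f)=T_f^{**}\circ T^*|_M$ with $T_f:\Im(S)\to Z$ the linearization of $f$ and $T^*$ the adjoint of a scalar extension operator $T$, and unwinding the duality gives $\langle T_f^{**}(T^*\delta_x),w\rangle = T(\langle f(\cdot),w\rangle)(x)$, which is precisely your weak\textsuperscript{*} definition of $\widetilde T$ for the dual space $Y=Z^{**}$. So up to the descent step the two proofs coincide, and your dual-space lemma is correct.

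The descent from $Z^{**}$-valued to $Z$-valued operators is where your proposal stops, and you should know that the paper does not in fact do more: its entire treatment of this step is the formula $\hat T(f)=R\circ T_f^{**}\circ T^*|_M$, with $R:Z^{**}\to Z$ the Lipschitz retraction, followed by the assertion that this ``readily'' defines a continuous \emph{linear} operator of norm at most $\|T\|$. As you correctly observe, postcomposition with a merely Lipschitz $R$ destroys additivity in $f$, so linearity is not readily seen and is false in general for this formula; moreover the norm estimate tacitly assumes $\mathrm{Lip}(R)=1$, whereas a retraction of constant $c>1$ only yields $\lambda(M,Z)\le c\,\lambda(M)$. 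Your attempt to linearize $R$ by averaging against an invariant mean, and your diagnosis of why it fails (the mean of a bounded $Z$-valued function lands in $Z^{**}$, exactly the non-complementation of $c_0$ in $\ell_\infty$), are both sound. In summary: your argument is complete for ultrasummands, where a bounded linear projection $P:Z^{**}\to Z$ makes the descent legitimate and gives $\lambda(M)\le\lambda(M,Z)\le\|P\|\,\lambda(M)$ (hence the stated equality when $\|P\|=1$, e.g.\ for dual spaces, where your lemma already gives it directly); for a general Lipschitz retract of its bidual you have a genuine gap, but it is the same gap that the paper's ``readily seen'' passes over, not a step for which the paper supplies an argument you missed.
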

\begin{proof}
Let $S$ be a subset of $M$. It is easy to see that $\lambda(M) \leq \lambda(M,Z)$. Indeed, if $\lambda(M,Z)= \infty$, there is nothing to prove. Thus, we assume that $\lambda(M,Z) < \infty$. Fix $x \in Z$ such that $\| x \| =1$ and choose $g \in Z^*$ such that $g(x)=1$ with $\|g\|=1$. Let $h : R \rightarrow Z$ be the linear isometry defined by $h(t)= t x$. It is easily seen that the map $E : \text{Lip}_0(S) \rightarrow \text{Lip}_0(S,Z)$ given by $E(f)= h \circ f $ defines a linear isometry. Now for a given $\epsilon>0$, there exists $T \in \text{Ext}(S,M,Z)$ such that $\|T \| < \lambda(S,M,Z) + \epsilon$. Then $\hat{T} = g \circ T \circ E : \text{Lip}(S) \rightarrow \text{Lip}(M)$ defines a linear map such that $\hat{T}(f)(s)=f(s)$, for each $s \in S$. In other words, $\hat{T} \in \text{Ext}(S,M)$ such that $\|\hat{T} \| \leq \|T \| < \lambda(S,M,Z) + \epsilon$. This gives $\lambda(S,M) < \lambda(S,M,Z) + \epsilon$. Since $S$ is arbitrary, this yields the desired inequality: $\lambda(M) \leq \lambda(M,Z)$.

For the reverse inequality, fix a subset $S$ of $M$ and assume that $\lambda(S,M)< \infty$. For each $\epsilon>0$, choose $T \in \text{Ext}(S,M)$ such that $\| T \| < \lambda(S,M)+ \epsilon$. Taking conjugates gives $T^* : \text{Lip}_0(M)^* = \Im(M)^{**} \rightarrow \text{Lip}_0(S)^*= \Im(S)^{**}$. Since $T$ is an extension operator, it follows that $T^*$ is continuous linear projection. Looking at $M$ as isometrically embedded into $\Im(M)^{**}$, it follows that $T^*$ restricted to $M$ acts as a Lipschitz map $T^{*}{\big|}_{M} : M \rightarrow \Im(S)^{**}$ such that $ \| T^{*}{\big|}_{M} \|_{\text{Lip}(M)}  \leq \| T \| $. To show that $\lambda(S,M,Z) \leq \lambda(S,M)$, let $f \in \text{Lip}_0(S,Z)$ and consider the associated linear map $T_{f} : \Im (S) \rightarrow Z$ which extends $f$ such that $\| T_{f} \| \leq \|  f     \|_{  \text{Lip}(S,Z)   }$. This gives $ T_{f}^{**} : \Im (S)^{**}  \rightarrow Z^{**}$ such that $T_{f}^{**}{\big|}_{S} = T_{f}{\big|}_{S} = T{\big|}_{S} =f $. By the conditions of the theorem, there exists a Lipschitz retraction $R : Z^{**} \rightarrow Z$. It is readily seen that the formula: $ \hat{T}(f) = R \circ T_{f}^{**} \circ T^{*}{\big|}_{M}$ defines a continuous linear operator $\hat{T} : \text{Lip}_0(S,Z) \rightarrow \text{Lip}_0(M,Z)$ with $\| \hat{T}(f) \| < \|f \|_{\text{Lip}(S,Z)} \left( \lambda(S,M)  + \epsilon \right)$.  Finally, since $\hat{T}(f){\big|}_{S} = f$, it follows that $\hat{T} \in \text{Ext}(S,M,Z)$ and that $\| \hat{T} \| < \lambda(S,M) + \epsilon$. This gives $\lambda(S,M,Z) \leq \lambda(S,M)$ and  yields the desired inequality as in the previous case.
\end{proof}

\section*{Acknowledgements}

Part of this work was carried out and completed during the author’s visit to the HRI, Allahabad during Dec.2018-Feb.2019. The author would like to thank Prof. Ratnakumar for his kind invitation and hospitality and the CSIR, New Delhi for providing support under its Emeritus Scientist-scheme 2015. The author would also like to thank the refrees for their remarks and suggestions which have led to an improvement in the presentation of the paper.


\bibliographystyle{amsplain}

\bibliographystyle{plain}
\bibliography{litteratur.bib}

\begin{thebibliography}{99}

\bibitem{AK}  F. Albiac and N. J. Kalton, \textit{Topics in Banach Space Theory},Second Edition Springer Verlag, Graduate Texts in Mathematics, vol. 233,2016.

\bibitem{AP}   L. Ambrosio and  D. Puglisi, \emph{Linear extension operators between spaces of Lipschitz maps and optimal transport},Journal fur die Reine und Angewandte Mathematik (DOI:10.1515/crelle-20180037).




\bibitem{BL}  Y. Benyamini and J. Lindenstrauss,  \textit{Geometric Nonlinear Functional Analysis}, vol.1, American Math. Soc., Colloquium Publications, \textbf{48} (2000).

\bibitem{BB1}  A. Brudnyi and Y. Brudnyi,  \textit{Metric spaces with linear extensions preserving Lipschitz condition}, Amer. J. Math. \textbf{129} (2007), 217--314.

\bibitem{BB2}  A. Brudnyi and Y. Brudnyi,  \textit{Linear and nonlinear extensions of Lipschitz functions from subsets of metric spaces}, St. Petersberg Math. Jour. \textbf{19} (2008), no. 3,  397--406.

\bibitem{CN} S. Cobzas, R, Miculescu and A. Nocalae, \textit{Lipschitz functions}, Lecture Notes in Mathematics, 2241, Springer, Cham, 436-452.
\bibitem{FK} Fakhourry, Selectiones lineaires associees au theorem de Hahn- Banach, J Funct. Anal. 11 (1972), 436-452.

\bibitem{FP}  M. Fernandez-Unzueta and A. Prieto,  \textit{Extension of polynomials defined on subspaces}, Math. Proc. Camb. Philos. Soc. \textbf{148} (2010), 505--518.


\bibitem{G}  G. Godefroy,  \textit{Extension of Lipschitz functions and Grothendieck's bounded approximation property}, North Western European Jour. of Mathematics, \textbf{1} (2015), 1--6.


\bibitem{GK} G. Godefroy and N. J. Kalton, \textit{Lipschitz free Banach spaces}, Studia Math. \textbf{159} (2003), no. 1, 121-141.

\bibitem{H}  J. Heinonen, \textit{ Letures on Metric Spaces}, UTX, Springer Verlag (2001).


\bibitem{JS} M. Jimenez-Sevilla and L. Sanchez Gonzalez,  \textit{On smooth extensions of vector-valued functions on closed subsets of Banach spaces}, Math. Ann. \textbf{355} (2013), 1201--1219.


\bibitem{K1}  N. J. Kalton,  \textit{Locally complemented subspaces and $L_p$ spaces for $0 < p <1$}, Math. Nachr. \textbf{115} (1984), 71--97.

\bibitem {K2} N. J. Kalton, \emph{Lipschitz and uniform embeddings into} $l_{\infty}$, Fund. Math. \textbf{212} (2011), 55-69.



\bibitem{Ki} M. D. Kirszbraun,  \textit{Uber die Zusammenziehenden und Lipschitzchen Transformationen}, Fund. Math. \textbf{22} (1934), 77--108.


\bibitem{Ko}  E. Kopecka,  \textit{Bootstrapping Kirszbraun’s extension theorem}, Fundamenta Mathematicae, \textbf{217} (2012), 13--19.

\bibitem{L}  J. Lindenstrauss,  \textit{On nonlinear projections in Banach spaces}, Michigan Math. J. \textbf{11} (1964), 263--287.



\bibitem{R}	S. Reich,  \textit{Extension problems for accretive subsets of Banach spaces}, Jour. Func. Anal. \textbf{26} (1977), 378--395.

\bibitem{S1} 	M. A.  Sofi,  \textit{Some problems in functional analysis inspired by Hahn-Banach type Theorems}, Ann. Funct. Anal. \textbf{5} (2014), no. 2, 1--29.

\bibitem{S2} M. A. Sofi, \it{Nonlinear aspects of certain linear phenomena in Banach spaces}, Applied analysis in biological and physical sciences, Springer Proc. Math. Stat., Springer New Delhi, 2016, 407--425.
    
\bibitem{St} 	E. M. Stein,  \textit{Singular Integrals and Differentiability Properties of Functions}, Princeton University Press (1970).

\bibitem{wea} N. Weaver, Lipschitz algebras, $2^{nd}$ Edition, World Scientific Publishing Co. Pte Ltd., Kackensack, NJ, 2018.

\end{thebibliography}

\end{document}